\numberwithin{equation}{section}
\theoremstyle{plain}
\newtheorem{theorem}{Theorem}[section]
\newtheorem{lemma}[theorem]{Lemma}
\newtheorem{corollary}[theorem]{Corollary}
\theoremstyle{definition}
\newtheorem{definition}[theorem]{Definition}
\newtheorem{example}[theorem]{Example}
\theoremstyle{remark}
\newtheorem{remark}[theorem]{Remark}
\newcommand{\RMOD}{\ensuremath{{R\text{-}\!\mathcal{M}od}}} 
\newcommand{\Hom}{\operatorname{Hom}}
\newcommand{\X}{\ensuremath{X}}
\newcommand{\Y}{\ensuremath{Y}}
\newcommand{\coclosed}[2]{\ensuremath{\xymatrix@1{ #1 \, \ar@{^{(}->}[r]^-{cc} &
#2}}}
\newcommand{\cosmall}[3]{\ensuremath{\xymatrix@1{ #1 \,
\ar@{^{(}->}[r]^-{cs}_-{#2} & #3}}}
\newcommand{\ShortExactSequence}[5]{\ensuremath{\xymatrix@1{ 0 \ar[r] &  #1 \ar[r]^-{#2} & #3 \ar[r]^-{#4} &  #5
\ar[r] & 0 }}}
 \newcommand{\To}{\longrightarrow}   
\begin{document}
\title{Transfinite tree quivers and their representations}


\author{E. Enochs, S. Estrada and S. \"{O}zdemir}

\address{Department of Mathematics, University of Kentucky,
Lexington, Kentucky 40506-0027,
U.S.A.}\email{enochs@ms.uky.edu}\address[]{Departamento de Matem\'{a}tica
Aplicada, Universidad de Murcia, Murcia 30100,
SPAIN.}\email{sestrada@um.es}\
\address[]{Dokuz Eyl\"{u}l \"{u}n\.{i}vers\.{i}tes\.{i}, Fen Fak\"{u}ltes\.{i},
Matemat\.{i}k B\"{o}l\"{u}m\"{u}, \.{I}zm\.{i}r, TURKEY.}
\email{salahattin.ozdemir@deu.edu.tr}

\keywords{transfinite tree, source injective representation quiver,
indecomposable injective}

\subjclass[2000]{16G20, 18A40}

\thanks{The second author has been partially supported by DGI
MTM2008-03339, by the Fundaci\'on Seneca
07552-GERM and by the Junta de Andaluc\'{\i}a,
Consejer\'{\i}a de Econom\'{\i}a,
Innovaci\'on y Ciencia and FEDER funds. The third author has been
supported by The Council of Higher Education (Y\"{O}K) and by The Scientific
Technological Research Council of Turkey (T\"{U}B\.{I}TAK)}

\renewcommand{\theenumi}{\arabic{enumi}}
\renewcommand{\labelenumi}{\emph{(\theenumi)}}

\begin{abstract}
The idea of ``vertex at the infinity'' naturally appears when studying
indecomposable injective representations of tree quivers. In this paper we
formalize this behavior and find the structure of all the indecomposable
injective representations of a tree quiver of size an arbitrary cardinal
$\kappa$. As a consequence the structure of injective representations of
noetherian $\kappa$-trees is completely determined. In the second part we will
consider the problem whether arbitrary trees are source injective representation
quivers or not.
\end{abstract}

\maketitle
\section{Introduction}
The classical representation theory of quivers motivated by Gabriel's work (\citet{Gabriel:UnzerlegbareDarstellungen}) involved finite quivers and assumed that the base ring is algebraically closed field and that all vector spaces involved were finite dimensional. Recently, representations by modules of more general (possibly infinite) quivers over any ring have been studied. The aim of this paper is to continue with the program initiated in \citet{Enochs-Herzog:HomotopyOfQuiverMorphismsWithApplicationsToRepresentations} and continued in \citet{Enochs-et.al:NoetherianQuivers}, \citet{Enochs-et.al:FlatCoversofRepresentationsOfTheQuiverA}, \citet{Enochs-et.al:FlatCoversAndFlatRepresentationsOfOuivers}, \citet{Enochs-Estrada:ProjectiveRepresentationsOfQuivers}, \citet{Enochs-et.al:InjectiveRepresentationsOfQuivers} and \citet{Estrada-Ozdemir:RelHomAlgInCatOfQuivers} to develop new techniques on the study of these more general representations.

Our main concern on this paper is to study injective representations of transfinite tree quivers. These quivers have been recently considered by Rump in \citet{Rump:FlatCoversInAbelianAndInNon-abelianCategories} in his study of the existence of flat covers
on certain non-necessarily Abelian categories. Transfinite tree quivers also
appear naturally
when studying indecomposable injective representations of tree quivers. Namely,
in \citet[Section 3]{Enochs-et.al:InjectiveRepresentationsOfQuivers} it is
proved that the indecomposable injective representations of a tree
are in one-to-one correspondence with both finite and infinite sequences of
modules of the form $E\stackrel{id}{\longrightarrow}
E\stackrel{id}{\longrightarrow}\cdots\stackrel{id}{\longrightarrow} E$ and
$E\stackrel{id}{\longrightarrow} E\stackrel{id}{\longrightarrow}
E\stackrel{id}{\longrightarrow} \cdots $ (where
$E$ is an indecomposable injective $R$-module and $id$ is the identity map). Each one of these sequences
corresponds to the different finite or infinite paths of the tree starting in
the root. For instance for the tree quiver $$A^{\infty}=v_1\longrightarrow
v_2\longrightarrow \ldots
\longrightarrow v_n\longrightarrow\ldots,$$
the indecomposable injective representations are of the form
$$E_n=E\stackrel{id}{\longrightarrow} E\stackrel{id}{\longrightarrow}
E\stackrel{id}{\longrightarrow}\ldots\stackrel{id}{\longrightarrow}
E\longrightarrow 0\longrightarrow 0\longrightarrow \ldots$$ and
$$E^{\infty}=E\stackrel{id}{\longrightarrow} E\stackrel{id}{\longrightarrow} E
\stackrel{id}{\longrightarrow} E\stackrel{id}{\longrightarrow}
\ldots .$$
Hence when studying indecomposable injective representations of a tree one has
to consider simultaneously both the vertices of the quiver and the ``vertices at
the inifinity''. In the present paper we make more precise the statement
``adding vertices at the infinity'' by introducing the notion of the completion
of a tree, and studying in Section \ref{sec:transfinite-trees-and-its-completion} the category of cocontinuous
representations of such completed trees. As an application of this,
we characterize in Theorem \ref{thm:characterization-of-indec.inj.rep-transfinite-trees} the indecomposable injective representations
of trees of any size in terms of its completion.

In the second part of the paper we will focus on the local properties of the
injective representations of trees.The class of
source injective representation quivers has been introduced in
\citet[Definition 2.2]{Enochs-et.al:InjectiveRepresentationsOfQuivers}. More
precisely, a quiver $Q$ is source
injective representation quiver whenever the injective representations $X$ of
$(Q, \RMOD)$ are characterized in terms of the following two local properties
(see Section \ref{sec:preliminaries} for unexplained terminology):
\begin{enumerate}
\item[i) ] ${\mathcal{X}}(v)$ is an injective $R$-module, for any vertex $v$ of
$Q$.
\item[ii) ] For any vertex $v$, the morphism
$${\mathcal{X}}(v)\longrightarrow \prod_{s(a)=v}{\mathcal{X}}(t(a))$$ induced by
${\mathcal{X}}(v)\longrightarrow {\mathcal{X}}(t(a))$ is a splitting
epimorphism.
\end{enumerate}
As it is shown in \citet[Theorem 4.2]
{Enochs-et.al:InjectiveRepresentationsOfQuivers} this important class of
quivers includes finite quivers and more generally
right rooted quivers, but do not include cyclic quivers (see \citet[Example
1]{Enochs-et.al:InjectiveRepresentationsOfQuivers}). Furthermore,
concerning to trees, it is shown in \citet[Corollary
5.5]{Enochs-et.al:InjectiveRepresentationsOfQuivers} that (possibly infinite)
barren trees are
examples of source injective representation quivers. So implicitly the question
whether all trees are source injective representation quivers or not was arisen.
We solve this question in the negative in Section \ref{sec:non-source-inj-rep-trees} by showing that non-barren
trees such that the set $\{t(a):\ s(a)=v\}$ is finite, for each
vertex $v$ of $Q$, are not source injective representation quivers.

\section{Preliminaries}\label{sec:preliminaries}
All rings considered in this paper will be associative with identity
and, unless otherwise specified, not necessarily commutative. The
letter $R$ will usually denote a ring. All modules are left
unitary $R$-modules. $\RMOD$ will denote the category of left $R$-modules.  We refer to
\citet{Simson-et.al:ElementsOfTheRepresentationTheoryOfAssociativeAlgebras}
for any undefined notion used in the text.

A \emph{quiver} is a directed graph whose edges are called arrows.
As usual we denote a quiver by $Q$ understanding that $Q= (V, E)$
where $V$ is the set of vertices and $E$ the set of arrows. An arrow
of a quiver from a vertex $v_1$ to a vertex $v_2$ is denoted by $a:
v_1 \to v_2$. In this case we write $s(a) = v_1$ the initial
(starting) vertex and $t(a) = v_2$ the terminal (ending) vertex. A
\emph{finite path} $p$ of a quiver $Q$ is a sequence of arrows $a_n \cdots
a_2 a_1$ with $t(a_i) = s(a_{i+1})$ for all $i=1, 2, \ldots, n-1$. Thus $s(p) = s(a_1)$ and $t(p)
= t(a_n)$. Two paths $p$ and $q$ can be composed, getting another
path $qp$ (or $pq$) whenever $t(p) = s(q)$ ($t(q) = s(p)$).

A quiver $Q$ may be thought of as a category in which the objects are
the vertices of $Q$ and the morphisms are the paths of $Q$. 
The vertices can be considered as the identities of $Q$, that is, a vertex $v$
of $Q$ is a trivial path where $s(v)=t(v) = v$. A (right) \emph{rooted} quiver
is a quiver having no path of the form $\bullet \to \bullet \to \ldots $ (see
\citet{Enochs-et.al:FlatCoversAndFlatRepresentationsOfOuivers}). A \emph{tree}
is a quiver $T$  having a vertex $v$ such that for another vertex $w$ of $T$,
there exists a unique path $p$ such that $s(p) = v$ and $t(p) = w$. Such a
vertex is called the root of the tree $T$. The (left) \emph{path space} of a
quiver $Q$, denoted by $P(Q)$, is the quiver whose vertices are the paths $p$ of
$Q$ and whose arrows are the pairs $(p, ap):p\to ap$ such that $ap$ is defined
(i.e. $s(a) = t(p)$). For each vertex $v\in V$, $P(Q)_v$ is a \emph{subtree} of
$P(Q)$ containing all paths of $Q$ with initial vertex $v$. A tree with a root
$v$ is said to be  \emph{barren} if the number of vertices $n_i$ of the $i$th
state of $T$ is finite for every natural number $i$ and the sequence of positive
natural numbers $n_1, n_2, \ldots$ stabilizes (see
\citet{Enochs-et.al:NoetherianQuivers}). For example, the tree $$\xymatrix{  & 
\bullet \ar[r] &  \bullet \ar[r] & \cdots \\
\bullet \ar[r]  \ar[ur] \ar[dr]  &  \bullet \ar[r] & \bullet \ar[r]
& \cdots
\\ &  \bullet \ar[r] & \bullet \ar[r] & \cdots }$$ is barren.

A representation by modules $\X$ of a given quiver $Q$ is a functor
$\X: Q \To \RMOD$. Such a representation is determined by giving a
module $\X(v)$ to each vertex $v$ of $Q$ and a homomorphism $\X(a):
\X(v_1) \to \X(v_2)$ to each arrow $a: v_1 \to v_2$ of $Q$. A
morphism $\eta$ between two  representations $\X$ and $\Y$ is a
natural transformation, so it will be a family  $\{\eta_v : X(v) \to Y(v)\}_{v\in V}$ such that
$\Y(a)\circ \eta_{v_1} = \eta_{v_2} \circ \X(a)$ for any arrow
$a:v_1 \to v_2$ of $Q$. Thus the representations of a quiver $Q$ by
modules over a ring $R$ form a category, denoted by $(Q, \RMOD)$. This is a Grothendieck category with enough projectives and injectives.

The category $(Q, \RMOD)$ is equivalent to the category of modules over the \emph{path ring} $RQ$, where $RQ$ is defined as the free left $R$-module whose
base are the paths of $Q$, and where the multiplication is the
obvious composition between two paths. $RQ$ is a ring with enough idempotents and in general it does not have an identity (unless the set $V$ is finite).

For a given quiver $Q$, one can define a family of projective
generators from an adjoint situation as it is shown  in
\citet{Mitchell:RingsWithSeveralObjects}. For every vertex $v\in V$
and the embedding morphism $\{v\} \subseteq Q$, the family $\{S_v(R)
: v\in V\}$ is a family of projective generators for the category of representations $(Q, \RMOD)$ where the
functor $S_v : \RMOD \to (Q, \RMOD)$ is defined in \citet[\S
28]{Mitchell:RingsWithSeveralObjects} as
\[
S_v (M)(w) = \bigoplus_{Q(v, w)} M
\]
 and for any arrow $a:w_1 \to w_2$ of $Q$,
 \[
 S_v(M)(a): \bigoplus_{Q(v, w_1)} M \to \bigoplus_{Q(v, w_2)} M
\]
is given by $\displaystyle S_v(M)(a)= \bigoplus_{Q(v, w_2)} id_{Q(v, w_2)}$,
 where $Q(v, w)$ is the set of paths of $Q$ starting at $v$
and ending at $w$. Then $S_v$ is a left adjoint functor of the
evaluation functor $T_v: (Q, \RMOD) \to (\{v\}, \RMOD)\cong \RMOD$ given by $T_v (X) =
X(v)$ for any representation $X\in (Q, \RMOD)$, and so given representations $M$ of $(\{v\}, \RMOD)$ (i.e. an $R$-module $M$) and $X$ of $(Q, \RMOD)$ there is a natural isomorphism: $$\Hom_{(Q, \RMOD)} (S_v (M), X)  \cong \Hom_{\RMOD} (M, X(v)).$$

Similarly, we can find a family of injective cogenerators from an adjoint situation (see \citet{Enochs-Herzog:HomotopyOfQuiverMorphismsWithApplicationsToRepresentations}). For every vertex $v \in V$ and the embedding morphism $\{v\} \subseteq Q$, the family $\{e_*^v (E) : v \in V \}$ is a family of injective cogenerators of $(Q, \RMOD)$, whenever $E$ is an injective cogenerator of $\RMOD$. The functor $e_*^v : \RMOD \to (Q, \RMOD)$ is defined in
\citet[\S 4]{Enochs-Herzog:HomotopyOfQuiverMorphismsWithApplicationsToRepresentations} as $$ e_*^v (M)(w) = \prod_{Q(w, v)}M,$$  and if $a:w_1 \to w_2$ is an arrow of $Q$, then $$e_*^v(M)(a): \prod_{Q(w_1, v)}M \to \prod_{Q(w_2, v)}M$$ is given by $\displaystyle e_*^v(M)(a) = \prod_{Q(w_2, v)a }id_{Q(w_2, v)}$. Then by \citet[Theorem 4.1]{Enochs-Herzog:HomotopyOfQuiverMorphismsWithApplicationsToRepresentations}, $e_*^v$ is the \emph{right} adjoint functor of  $T_v$, and so given representations $M$ of $(\{v\}, \RMOD)$ (i.e. an $R$-module $M$) and $X$ of $(Q, \RMOD)$ there is a natural isomorphism:
$$\Hom_{(Q, \RMOD)} (X, e_*^v(M)) \cong \Hom_{\RMOD} (X(v), M).$$


\section{Transfinite trees and their representations}\label{sec:transfinite-trees-and-its-completion}

A \emph{well-ordered} set is a totally ordered set satisfying the descending chain condition. The prototype of such a set is $\mathbb{N} = \{0, 1, 2, \ldots\}$. So, a tree $T$ can be regarded as a partially ordered set, where $u \leq v$ if there is a path from $u$ to $v$ ($u, v \in T$). Such a tree can be thought of as a generalization of $\mathbb{N}$ as a partially ordered set. The following is a way to define trees which correspond to ordinal numbers in general (where $\mathbb{N}$ has the ordinal number $\omega$).

\begin{definition}\label{defn:transfiniteTREE}
By a \emph{transfinite tree} we mean a partially ordered set $T$ satisfying
\begin{itemize}
  \item[(i)] the descending chain condition, that is, if $v_0 \geq v_1 \geq \cdots$ then for some $n_0$, we have $v_{n_0} = v_n$ for all $n\geq n_0$, $v_i \in T$;
  \item[(ii)] for any $v\in T$, the set of $w \leq v$ is totally ordered, that is, if $w, w' \leq v$ then either $w\leq w'$ or $w' \leq w$;
  \item[(iii)] there is a least element $v\in T$, that is, $v\leq v'$ for all $v' \in T$.
\end{itemize}
\end{definition}

Now, we partition the transfinite tree $T$ as follows (the partition will be indexed by the ordinal numbers):

Let $T_0 = \{v\}$ where $v$ is the least element of $T$. Let $T_1$ be the set of $w\in T$ such that $v\neq w$ and such that $v\leq u \leq w$ implies that $v=u$ or $u=w$ or equivalently, the cardinality of the set of $u\leq w$ is $2$. Then define $T_2$ to be the set of $w\in T$ such that the cardinality of the set $u\leq w$ is $3$. Similarly, we define $T_3, T_4, T_5, \ldots$. Therefore, we define $T_{\omega}$ to be the set of $w\in T$ such that if $u < w$ then $u\in T_n$ for some $n \in \mathbb{N}$. So, for an ordinal number $\alpha$, we define $T_{\alpha}$ by transfinite induction: \\
So having defined $T_{\alpha}$ for $\alpha < \beta$, we must define $T_{\beta}$. If $\beta$ is not a limit ordinal, then $\beta = \alpha + 1$ for some $\alpha$. Then we let $T_{\alpha + 1}$ consists of $w\in T$ such that $v < w$ for some $v\in T_{\alpha}$, where $v \leq u \leq w$ implies that $v=u$ or $u=w$. If $\beta$ is a limit ordinal, we let $w\in T_{\beta}$ if whenever $v < w$ we have $v\in T_{\alpha}$ for some $\alpha < \beta$ and if, for every $\alpha' < \beta$, there is a $v < w$ with  $v\in T_{\alpha}$ where $\alpha' \leq \alpha < \beta$.

\begin{remark}
If $T_{\omega} = \emptyset$ then $T$ is a tree in the usual sense, where there is an arrow from $u$ to $v$ if $u < v$, and  $u\leq w \leq v$ implies that $u=w$ or $w = v$.
\end{remark}

The following is an example of a transfinite tree which is not a (usual) tree.

\begin{example}\label{exm:transfiniteTREE-not-usual-TREE}
The tree $T \equiv v_0 \to v_1 \to \cdots \to v_n \to \cdots v_{\omega}$ (where $\omega$ is the first limit ordinal) is not a usual tree. Indeed, $T_{\omega} \neq \emptyset$ since $v_{\omega} \in T_{\omega}$ (as for all $u < v_{\omega}$, $u\in T_n$ for some $n\in \mathbb{N}$).
\end{example}

\begin{definition}
A tree $T$ is said to be \emph{complete} if every chain (i.e. totally ordered subset) of $T$ has a least upper bound in $T$.
\end{definition}

\begin{remark}
Every tree $T$ has a completion $T \subseteq \overline{T}$. This means that $\overline{T}$ is a complete tree, $T$ is a subtree of $\overline{T}$ such that if $v\in T$ and $u\leq v$ for some $u\in \overline{T}$, then $u\in T$, and such that every $v\in \overline{T}$ is the least upper bound of a chain in $T$.
\end{remark}

Notice that, if $T$ is a transfinite tree then we can make $T$ into a category, where $\Hom (u, v)$ is empty if $u\nleq v$. Then, for $u, v, w \in T$,
$$\Hom(v, w) \times \Hom(u, v) \To \Hom(u, w)$$ is defined in the obvious manner. Thus a representation $X$ of $T$ in $\RMOD$ is just a functor $X:T \to \RMOD$.

We point out that all representations of $(T, \RMOD)$, where $T$ is a
transfinite tree, that we consider are cocontinuous. 
This is because we want to generalize usual representations of (usual) trees (or
in general of quivers), that is, we know that if $Q$ is any quiver and $X$ is a
representation of $(Q, \RMOD)$ then, for a finite path $v_{i_1} \to v_{i_2} \to
\cdots \to v_{i_n}$ on $Q$, we have trivially that $$v_{i_n} = \sup \,\{v_{ij}
\mid j \leq n\}\quad \text{ and } \quad X(v_{i_n}) = {\lim_{\rightarrow}}_{j\leq
n} X(v_{ij}). \qquad (*)$$ But if we admit \emph{infinite} paths (as it happens
in transfinite trees), then we want our representations to satisfy the same
property $(*)$, that is, if $u = \sup \, \{v_{\alpha} \mid \alpha < \gamma\}$
(and so $u\in T_{\gamma}$), where $\gamma$ is an ordinal number, then $$X(u) =
{\lim_{\rightarrow}}_{\alpha < \gamma} X(v_{\alpha}).$$ Such $X$ are called
\emph{cocontinuous representations}. So our representations of transfinite trees
will be \emph{cocontinuous}. Of course, if $T$ is complete, then all such
supremums always exist, but if not, then we can consider the completion
$\overline{T}$ of $T$  because the categories  $(T, \RMOD)$ and $(\overline{T},
\RMOD)$ are easily shown to be equivalent.

It is then clear that the indecomposable injective
representations of $(T, \RMOD)$ are in  $1$-$1$ correspondence with the
indecomposable injective representations of $(\overline{T},
\RMOD)$.

\section{Indecomposable injective representations of transfinite trees}\label{sec:Indec.inj.rep.of-transfinite-trees}

\begin{lemma}
If $T_{\alpha} = \emptyset$ for any $\alpha$, then $T_{\beta} = \emptyset$ when $\alpha \leq \beta$.
\end{lemma}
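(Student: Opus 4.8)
The plan is to prove the statement directly by transfinite induction on $\beta$, for a fixed $\alpha$ with $T_\alpha = \emptyset$. (There is nothing to prove unless such an $\alpha$ exists, and in any case necessarily $\alpha \geq 1$, since $T_0 = \{v\} \neq \emptyset$ by item (iii) of Definition \ref{defn:transfiniteTREE}.) So I would prove: for all $\beta \geq \alpha$, $T_\beta = \emptyset$.

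First I would extract from the definition of the partition the two ``witness'' facts I need. From the clause defining $T_{\gamma+1}$: if $w \in T_{\gamma+1}$, then there is $v \in T_\gamma$ with $v < w$; hence $T_\gamma \neq \emptyset$ whenever $T_{\gamma+1} \neq \emptyset$. From the clause defining $T_\beta$ at a limit ordinal $\beta$: if $w \in T_\beta$, then for every $\alpha' < \beta$ there is $v < w$ lying in $T_\gamma$ for some ordinal $\gamma$ with $\alpha' \leq \gamma < \beta$. Both are immediate rephrasings of the defining conditions; no work is involved beyond reading them correctly.

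Now the induction. The base case $\beta = \alpha$ is the hypothesis. For the successor step, let $\beta = \gamma + 1 > \alpha$; then $\gamma \geq \alpha$, so $T_\gamma = \emptyset$ by the induction hypothesis, and the first witness fact forbids $T_{\gamma+1}$ from being nonempty, i.e.\ $T_\beta = \emptyset$. For the limit step, let $\beta > \alpha$ be a limit ordinal and suppose, toward a contradiction, that some $w \in T_\beta$. Since $\alpha < \beta$, the second witness fact applied with $\alpha' = \alpha$ yields $v < w$ with $v \in T_\gamma$ for some $\gamma$ satisfying $\alpha \leq \gamma < \beta$; but the induction hypothesis gives $T_\gamma = \emptyset$ for every such $\gamma$, contradicting $v \in T_\gamma$. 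Hence $T_\beta = \emptyset$, and the induction is complete.

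The only delicate point is the bookkeeping in the limit case: the limit clause of the definition introduces a bound variable ranging over an interval $[\alpha',\beta)$, and one must instantiate $\alpha'$ as the fixed ordinal $\alpha$ and then check that the resulting witness index $\gamma$ still lands in the range $[\alpha,\beta)$ governed by the induction hypothesis. Once that is set up, everything is routine, and the successor step uses only the definition (not even the inductive hypothesis).
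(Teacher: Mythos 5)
Your proof is correct, and it is in fact more complete than the one the paper gives. The paper argues directly by contradiction in one line: given $w \in T_\beta$, it invokes the limit-ordinal clause to produce a $v < w$ lying in some $T_\gamma$ with $\gamma < \beta$ and declares this impossible because $T_\alpha = \emptyset$. As written, that only yields a contradiction if the witness lands exactly in $T_\alpha$, whereas the clause merely guarantees some $\gamma$ with $\alpha \le \gamma < \beta$; moreover the paper never addresses successor ordinals $\beta$ at all. Your transfinite induction supplies exactly the missing scaffolding: the induction hypothesis is what lets you conclude $T_\gamma = \emptyset$ for \emph{every} $\gamma$ with $\alpha \le \gamma < \beta$, which is what the contradiction in the limit case genuinely requires, and you handle the successor case separately via the witness $v \in T_\gamma$ demanded by the definition of $T_{\gamma+1}$. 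So the two arguments share the same underlying idea (read off the witness clauses and derive a contradiction from a hypothetical element of $T_\beta$), but yours is the rigorous version the paper's terse proof implicitly relies on. The only blemish is your closing remark that the successor step does not use the inductive hypothesis: it does, since for $\beta = \gamma + 1$ with $\gamma > \alpha$ the fact that $T_\gamma = \emptyset$ comes from the induction, not from the lemma's hypothesis.
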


\begin{proof}
Suppose on the contrary that
$T_{\beta} \neq \emptyset$. Then there exists a $w \in T_{\beta}$, and so there is a $v < w$ with $v \in T_{\alpha}$ for some
$\alpha < \beta$ (by definition of $T_{\beta}$) which is impossible (since $T_{\alpha} = \emptyset$).
\end{proof}

Notice that since $e_*^v (E)$ is a right adjoint functor of the evaluation functor $T_v$ as we have pointed out at the end of Section \ref{sec:preliminaries}, we have that if $E$ is an indecomposable injective $R$-module, then $e_*^v (E)$ is also  an indecomposable injective representation.
\begin{theorem}\label{thm:characterization-of-indec.inj.rep-transfinite-trees}
If $X$ is an indecomposable injective representation of $(\overline{T}, \RMOD)$ where $\overline{T}$ is a completion of the tree $T$, then $X \cong e_*^v (E)$ for some vertex $v\in \overline{T}$ and some indecomposable injective $R$-module $E$.
\end{theorem}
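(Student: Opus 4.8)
The plan is to analyze the structure of $X$ through its behavior at the root and then "propagate" it up the tree by transfinite induction on the layers $T_\alpha$ of $\overline T$. Let $v_0$ be the root of $\overline T$. First I would observe that $E := X(v_0)$ must be an indecomposable injective $R$-module: since $e_*^{v_0}$ is right adjoint to the evaluation $T_{v_0}$, applying $\Hom_{(\overline T,\RMOD)}(-,e_*^{v_0}(E'))$ and using the adjunction isomorphism $\Hom_{(\overline T,\RMOD)}(X,e_*^{v_0}(E'))\cong\Hom_{\RMOD}(X(v_0),E')$, injectivity of $X$ forces $X(v_0)$ to be injective, and a nontrivial idempotent of $\operatorname{End}(X(v_0))$ would, via the adjunction and the universal property, lift to a nontrivial idempotent of $\operatorname{End}(X)$, contradicting indecomposability. (Alternatively: $X\hookrightarrow e_*^{v_0}(E_0)$ for an injective cogenerator $E_0$; decompose and use indecomposability.) So set $E = X(v_0)$.

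Next I would build a morphism $\varphi\colon X\to e_*^{v_0}(E)$ corresponding under the adjunction to $\mathrm{id}_E\colon X(v_0)\to E$, and show $\varphi$ is an isomorphism. Because $e_*^{v_0}(E)$ is an indecomposable injective representation (noted in the excerpt) and $X$ is injective, it suffices to show $\varphi$ is a monomorphism, or equivalently that $X$ embeds in $e_*^{v_0}(E)$ compatibly; then $X$, being injective and indecomposable, is a direct summand of the indecomposable $e_*^{v_0}(E)$, hence equals it. To check $\varphi_w\colon X(w)\to \prod_{\overline T(w,v_0)}E$ is an isomorphism for every $w$, I would argue by transfinite induction on the unique $\alpha$ with $w\in T_\alpha$. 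For $w=v_0$ it is $\mathrm{id}_E$. For the successor step, if $u\in T_\alpha$ is the unique predecessor of $w\in T_{\alpha+1}$, then there is exactly one path from $w$ to $v_0$, namely the composite through $u$, so $\overline T(w,v_0)\cong\overline T(u,v_0)$ as index sets, and I must show $X(w)\to X(u)$ is an isomorphism; this is where injectivity of $X$ enters locally, via the analogue of property (ii) in the introduction — one shows the structure map $X(w)\to X(u)$ splits and, using indecomposability together with the inductive identification $X(u)\cong\prod_{\overline T(u,v_0)}E$, that the splitting has no room for a complement, forcing it to be an isomorphism. For the limit step, cocontinuity of $X$ gives $X(w)=\varinjlim_{u<w}X(u)$; but in a tree the $u<w$ form a chain, and $\overline T$ being complete means $w=\sup\{u:u<w\}$, and I would check that $e_*^{v_0}(E)$ is likewise cocontinuous at $w$ (the index sets $\overline T(u,v_0)$ stabilize along the chain since each arrow in the chain induces a bijection on paths to $v_0$), so the inductive isomorphisms $X(u)\cong e_*^{v_0}(E)(u)$ pass to the colimit.

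The main obstacle I anticipate is the successor step: showing that the splitting epimorphism $X(w)\to X(u)$ is actually an isomorphism rather than merely split. The naive worry is that $X(w)\cong X(u)\oplus K$ for some injective $K$; one must rule this out using indecomposability of $X$, which requires showing that a nonzero such $K$ would produce a genuine direct-sum decomposition of the whole representation $X$ (e.g. by extending the summand $K$ sitting at $w$ upward through the subtree above $w$ via $e_*$-type constructions, and noting it contributes nothing below $w$ since there is a unique path). Making this global decomposition precise — verifying the pieces assemble into honest subrepresentations whose direct sum is $X$ — is the delicate part; once it is in hand, indecomposability kills $K$ and the induction closes, yielding $X\cong e_*^{v_0}(E)$ with $v=v_0$ and $E$ indecomposable injective.
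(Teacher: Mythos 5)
Your proposal does not prove the theorem as stated, and the root of the problem is that you have fixed the wrong target representation. You aim to show $X\cong e_*^{v_0}(E)$ where $v_0$ is the root, but with the paper's conventions (arrows of a tree point away from the root, and $e_*^v(M)(w)=\prod_{Q(w,v)}M$ with $Q(w,v)$ the paths \emph{from} $w$ \emph{to} $v$) the representation $e_*^{v_0}(E)$ has $Q(w,v_0)=\emptyset$ for every $w\neq v_0$, i.e.\ it is $E$ at the root and $0$ everywhere else. The content of the theorem is precisely that $X\cong e_*^{v}(E)$ for \emph{some} vertex $v$ of $\overline{T}$ --- the representation that is $E$ with identity maps along the unique segment from the root up to $v$ and $0$ off that segment --- and the whole difficulty is to \emph{locate} that vertex $v$, which your argument never does. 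Your morphism $\varphi_w\colon X(w)\to\prod_{\overline{T}(w,v_0)}E$ and the claim that ``there is exactly one path from $w$ to $v_0$'' reverse the direction of the arrows; under the correct reading $\varphi$ is zero away from the root and cannot be an isomorphism unless $X$ is concentrated there.

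Two further structural pieces are missing even after correcting the direction. First, your successor step treats the tree as a single chain (``the unique predecessor $u$ of $w$'' and a map $X(w)\to X(u)$), but a vertex generally has many successors; the paper's argument applies the splitting epimorphism $X(v_\alpha)\to X(v_{\alpha+1})\oplus\prod_{u}X(u)$ over \emph{all} successors, shows its kernel must vanish (a nonzero kernel would give a proper nonzero injective subrepresentation, hence a direct summand, of the indecomposable $X$), and then uses an explicit decomposition $X=Y'\oplus Y''$ into the part supported on the branch through $v_{\alpha+1}$ and the part supported off it, so that indecomposability forces $X$ to live on a single branch. You gesture at this in your ``main obstacle'' paragraph but do not supply it. Second, the case analysis that actually produces the vertex $v$ is absent: either $X$ is nonzero at some vertex of every layer $T_\alpha$, in which case $v$ is obtained at the top (using cocontinuity at limit layers, as you do anticipate), or there is a least layer $\alpha^*$ where $X$ vanishes identically, in which case $\alpha^*$ is shown to be a successor $\mu+1$ and $v=v_\mu$. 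Without this dichotomy the proof cannot recover, e.g., the finite representations $E_n=E\to\cdots\to E\to 0\to\cdots$ of $A^\infty$, which are indecomposable injectives not of the form $e_*^{v_0}(E)$.
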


\begin{proof}
If we consider $T'_{\alpha}$'s which have been defined after Definition \ref{defn:transfiniteTREE}, then we see that they are not in a  chain. So let us define $\displaystyle T_{\gamma} = \bigcup_{\alpha \leq \gamma} T'_{\alpha}$ in order that the new $T_{\gamma}$'s are in a chain. Now, since $T$ is a set, $T = T_{\lambda}$ for some ordinal number $\lambda$ (and so $T_0 \subset T_1 \subset T_2 \subset \cdots T_{\omega} \subset \cdots \subset T_{\lambda} = T$).

\begin{enumerate}
\item[(i)] Firstly, we will prove by transfinite induction that if, for each $\alpha \leq \lambda$, there exists $v_{\alpha} \in T_{\alpha}$ such that $X(v_{\alpha}) \neq 0$, then $X\mid_{T_{\alpha}} \cong e_*^{v_{\alpha}} (E)$ (where $E = X(v_0))$. For $\alpha = 0$, we have trivially that $X\mid_{T_0} \cong e_*^{v_0} (E)$ (where $E = X(v_0))$. Assume that $\alpha$ has a successor and that $X\mid_{T_{\alpha}} \cong e_*^{v_{\alpha}} (E)$. We want to show that $X\mid_{T_{\alpha+1}} \cong e_*^{v_{\alpha+1}} (E)$. Since $X$ is an injective representation, we have, by
    \citet[Proposition 2.1]{Enochs-et.al:InjectiveRepresentationsOfQuivers}, a splitting epimorphism $$f: X(v_{\alpha}) \To X(v_{\alpha + 1}) \oplus \prod_{v_{\alpha}<u , u\neq v_{\alpha+1}} X(u) \quad (\text{ where } X(v_{\alpha+1}) \neq 0).$$
If $f$ is an isomorphism, then define a representation $Y'$ such that $Y' \mid_{T_{\alpha}} = X \mid_{T_{\alpha}}$ and, for $v\in T_{\beta}$ when $\alpha + 1 \leq \beta$, $Y' (v) = X(v)$ if the unique path from $v_{\alpha}$ to $v$ goes through $v_{\alpha+1}$ and $Y'(v)=0$ otherwise. Then let us define another representation $Y''$ such that $Y'' \mid_{T_{\alpha}} = 0$ and, for $v\in T_{\beta}$ when $\alpha +1 \leq \beta$, $Y'' (v) = 0$ if the unique path from $v_{\alpha}$ to $v$ goes through $v_{\alpha+1}$ and $Y'' (v) = X(v)$ otherwise. In fact, $Y'$ and $Y''$ are subrepresentations of $X$ and thus $X = Y' \oplus Y''$. So by  the indecomposability of $X$ we get $X = Y'$ since $Y' (v_{\alpha+1}) = X (v_{\alpha+1}) \neq 0$. In particular, $X \mid_{T_{\alpha+1}} \cong e_*^{v_{\alpha +1}} (E)$.

So now suppose that $f$ is not an isomorphism, and let $K$ be the kernel of $f$. Then define a representation $Y$ such that $Y(v)= K$ if $v \in T_{\gamma}$, $\gamma < \alpha+1$ and $Y(v)=0$ otherwise (then of course $Y(v_{\alpha+1}) = 0$). In fact, $Y$ is an injective subrepresentation of $X$ (since $K$ is an injective module as $f$ is splitting), and so a direct summand of $X$. But since $X$ is indecomposable and $Y\neq 0$ (as $K \neq 0$), we obtain $X = Y$. This is impossible because $X(v_{\alpha +1}) \neq 0$, but $Y(v_{\alpha +1})=0$.

Now assume that $\gamma \leq \lambda$ is a limit ordinal and that $X \mid_{T_{\alpha}} \cong e_*^{v_{\alpha}} (E)$ for all
$\alpha < \gamma$. We show that $X \mid_{T_{\gamma}} \cong e_*^{v_{\gamma}} (E)$. Let $v_{\gamma} = \sup \{v_{\alpha} \mid \alpha < \gamma \}$. Then
$$X (v_{\gamma}) = {\lim_{\rightarrow}}_{\alpha < \gamma} X (v_{\alpha}) = {\lim_{\rightarrow}}_{\alpha < \gamma} E = E $$
 and if $v_{\gamma} \neq u \in T_{\gamma}$, then
$$X(u)= {\lim_{\rightarrow}}_{\alpha < \gamma} X \mid_{T_{\alpha}} (u) = {\lim_{\rightarrow}}_{\alpha < \gamma} 0 = 0 .$$
So $X \mid_{T_{\gamma}} \cong e_*^{v_{\gamma}} (E)$.
  \item[(ii)] On the contrary to (i), let us assume that there exists $\alpha^* \leq \lambda$ such that $X(v) = 0$, for all $v\in T_{\alpha^*}$ and $\alpha^*$ is the smallest ordinal with this property. Then $\alpha^*$ cannot be a limit ordinal. Because, if $\alpha^*$ is a limit ordinal then  $X \mid_{T_{\beta}} \cong e_*^{v_{\beta}} (E)$ for all $\beta < \alpha^*$ by (i), but then $v_{\alpha^*} = \sup \{v_{\beta} \mid \beta < \alpha^*\} \in T_{\alpha^*}$ and $$X(v_{\alpha^*}) = {\lim_{\rightarrow}}_{\beta < \alpha^*} X \mid_{T_{\beta}} (v_{\beta}) = {\lim_{\rightarrow}}_{\beta < \alpha^*} E = E \neq 0 $$ which contradicts with our assumption. So $\alpha^*$ is a successor ordinal. Let $\alpha^* = \mu + 1$. Thus
      $X \mid_{T_{\mu}} \cong  e_*^{v_{\mu}}(E)$ by (i) (since $X(v_{\mu}) \neq 0$), and  $X(v_{\beta}) = 0$ for all $v_{\beta} \in T_{\beta}$, when $\beta \geq \mu + 1$ (since, by assumption, $X(v)=0$ for any $v\in T_{\alpha^*}$). Hence $X \cong e_*^{v_{\mu}}(E)$ in $T$.
\end{enumerate}
\end{proof}

The following definition is based on the characterization of locally noetherian categories of representations of quivers provided in \citet[Theorem 2.6]{Enochs-et.al:NoetherianQuivers}.
\begin{definition}
A transfinite tree $T$ is called \emph{noetherian} if $(T, \RMOD)$ is locally
noetherian for every left noetherian ring $R$, that is, $P(Q)_v$ is barren for
all $v\in T$ and $R$ is left noetherian.
\end{definition}

\begin{example}
Consider the tree $T$ as in Example \ref{exm:transfiniteTREE-not-usual-TREE} and the category of (cocontinuous) representations
$(T, \RMOD)$, where $R$ is a left noetherian ring. Then the (left) path spaces of $T$ are:
$$P(Q)_{v_0} \equiv v_0 \to v_1 \to \cdots \quad(\text{ not including } v_{\omega}),$$
$$P(Q)_{v_1} \equiv v_1 \to v_2 \to \cdots \quad(\text{ not including } v_{\omega}),$$
$$\vdots \hspace{5cm}$$
$$P(Q)_{v_{\omega}} \equiv v_{\omega} .\hspace{5cm}\qquad$$
 So  all of them are barren. Thus $T$ is noetherian.
\end{example}

Since $(T, \RMOD)$ is a Grothendieck category we obtain the following corollary.
\begin{corollary}(\textbf{Matlis Theorem for noetherian transfinite trees})\\
Let $T$ be a noetherian transfinite tree. Then any injective representation of $(T, \RMOD)$ is, uniquely up to isomorphism, the direct sum of the indecomposable injectives $e_*^v (E)$ for some $v\in T$, where $E$ is an indecomposable injective $R$-module.
\end{corollary}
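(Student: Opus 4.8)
The plan is to derive this as a direct consequence of Theorem~\ref{thm:characterization-of-indec.inj.rep-transfinite-trees} together with the general structure theory of injective objects in a locally noetherian Grothendieck category. First I would observe that the definition of a noetherian transfinite tree $T$ says precisely that $(T, \RMOD)$ (equivalently $(\overline{T}, \RMOD)$) is a locally noetherian Grothendieck category whenever $R$ is left noetherian, by appeal to \citet[Theorem 2.6]{Enochs-et.al:NoetherianQuivers} and the equivalence $(T,\RMOD)\simeq(\overline{T},\RMOD)$ recorded at the end of Section~\ref{sec:transfinite-trees-and-its-completion}. In any locally noetherian Grothendieck category the classical Matlis--Gabriel theory applies: every injective object decomposes, uniquely up to isomorphism, as a direct sum of indecomposable injective objects (the decomposition being essentially unique because indecomposable injectives have local endomorphism rings, so one can invoke the Azumaya--Krull--Remak--Schmidt theorem for injective objects in such categories). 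So the only thing that remains is to identify the indecomposable injectives.

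Next I would invoke Theorem~\ref{thm:characterization-of-indec.inj.rep-transfinite-trees}, which tells us that every indecomposable injective representation of $(\overline{T}, \RMOD)$ is of the form $e_*^v(E)$ for some $v\in\overline{T}$ and some indecomposable injective $R$-module $E$; conversely, the remark preceding that theorem shows each such $e_*^v(E)$ is indeed an indecomposable injective (since $e_*^v$ is right adjoint to the exact evaluation functor $T_v$, hence preserves injectives, and one checks indecomposability via $\operatorname{End}(e_*^v(E))\cong\operatorname{End}_R(E)$, which is local). Since $T$ is noetherian we may in fact work over $T$ itself rather than its completion (the argument in part (ii) of the proof of Theorem~\ref{thm:characterization-of-indec.inj.rep-transfinite-trees} already shows the relevant $v$ lies in $T$); in any case the equivalence of categories transports the decomposition back to $(T,\RMOD)$. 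Combining the two facts, every injective representation of $(T,\RMOD)$ is a direct sum of modules of the form $e_*^v(E)$ with $v\in T$ and $E$ an indecomposable injective $R$-module, uniquely up to isomorphism.

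The main obstacle — and the only point requiring genuine care — is the uniqueness (Krull--Schmidt) statement: one must be sure that the Azumaya-type uniqueness of decompositions into objects with local endomorphism rings is available for \emph{possibly infinite} direct sums of injectives in a locally noetherian Grothendieck category, and that the hypotheses of that theorem (local endomorphism rings of the summands; the ambient category being locally noetherian so that arbitrary direct sums of injectives are injective) are all met here. This is standard, but it is where the "noetherian" hypothesis on $T$ is actually used. The existence half is comparatively soft: it follows from the fact that in a locally noetherian category every injective object is a direct sum of injective hulls of finitely generated (indeed, uniform) subobjects, each of which is an indecomposable injective, and then one applies Theorem~\ref{thm:characterization-of-indec.inj.rep-transfinite-trees} to name these indecomposables explicitly.
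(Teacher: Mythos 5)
Your proposal is correct and follows exactly the route the paper intends: the paper offers no written proof beyond the remark that $(T,\RMOD)$ is a (locally noetherian) Grothendieck category, so the corollary is meant to follow from the classical Matlis--Gabriel decomposition of injectives in such categories combined with Theorem~\ref{thm:characterization-of-indec.inj.rep-transfinite-trees}, which is precisely what you do. Your additional care about the Azumaya-type uniqueness and about transporting the decomposition from $(\overline{T},\RMOD)$ back to $(T,\RMOD)$ only makes explicit what the paper leaves implicit.
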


\section{Non-source injective representation
trees}\label{sec:non-source-inj-rep-trees}
As we have pointed out in the introduction, infinite barren trees are source injective representation quivers. So, in this section, we show that non-barren trees which satisfy some condition on their vertices are not source injective representation quivers.

\begin{definition}\citep[Definition
2.2]{Enochs-et.al:InjectiveRepresentationsOfQuivers}

A quiver $Q$ is called \emph{a source injective
representation quiver} if, for
any ring $R$, any \emph{injective} representation $\X$ of $(Q,
\RMOD)$ can be characterized in terms of the following conditions:

\begin{itemize}
    \item[(i)]$\X (v)$ is injective $R$-module, for any vertex $v$
    of $Q$.
    \item[(ii)] For any vertex $v$ the morphism $$\X (v) \To \prod_{s(a)= v} \X
    (t(a))$$ induced by $\X (v) \To \X (t(a))$ is a splitting
    epimorphism.
\end{itemize}
\end{definition}

In the proof of the following lemma, we use the fact that  a morphism $\zeta : S_u(R) \to X$ is uniquely determined by defining $\zeta (1_u) \in X(u)$ for any vertex $u$ and representation $X$ of a quiver $Q$ (since $S_u$ is a left adjoint functor of the
evaluation functor $T_v$).
\begin{lemma}\label{prop:For-non-barren-treeTHERE-exists-family-of-inj-rep-directSUM-not-inj.}
If the tree $T$ is not barren, then there exists a family $\{E_i \mid i \in I\}$ of injective representations of $T$ such that $\oplus_{i\in I} E_i$ is not injective, where $I$ is an infinite index set.
\end{lemma}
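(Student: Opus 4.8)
The plan is to exploit the failure of barrenness to build, for a suitable left noetherian ring $R$, a family of injective representations whose direct sum fails to be injective; by \citet[Theorem 2.6]{Enochs-et.al:NoetherianQuivers} (the characterization of locally noetherian $(Q,\RMOD)$), a tree which is not barren has some vertex $v$ for which $P(Q)_v$ is not barren, and hence $(T,\RMOD)$ is \emph{not} locally noetherian for some left noetherian $R$. Since in a locally noetherian Grothendieck category arbitrary direct sums of injectives are injective --- and this property in fact \emph{characterizes} local noetherianity among Grothendieck categories (Faith--Walker / Matlis) --- the failure of local noetherianity immediately yields a family of injectives with non-injective direct sum. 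So at the highest level the lemma is essentially a translation: ``not barren $\Rightarrow$ not locally noetherian (over some left noetherian $R$) $\Rightarrow$ direct sums of injectives need not be injective.''

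First I would make the choice of $R$ explicit: take $R$ to be any left noetherian ring that is not left artinian, e.g. $R = \mathbb{Z}$, or better a ring with a strictly increasing chain of injective modules so that the obstruction is visible at the level of a single infinite path. Then I would locate, using the definition of barren (the sequence $n_1, n_2, \dots$ of cardinalities of the $i$-th states either has some infinite term or fails to stabilize), an explicit ``bad'' sub-configuration inside $T$: either an infinite antichain of vertices all lying in bounded states below some vertex $w$, or an infinite strictly ascending chain of path spaces. Concretely, non-barrenness gives a vertex $w$ and infinitely many distinct paths $p_i$ ($i\in I$) from the root into increasingly deep states with $t(p_i)\le w$ failing the stabilization condition; this is exactly the combinatorial shadow of ``$P(Q)_v$ not barren for some $v$.'' I would then set $E_i := e_*^{t(p_i)}(E)$ for a fixed indecomposable injective $R$-module $E$ (these are injective representations by the remark preceding Theorem \ref{thm:characterization-of-indec.inj.rep-transfinite-trees}), and argue that $\bigoplus_{i\in I} E_i$ is not injective by testing the injectivity criterion: evaluate at the vertex $w$ (or at a vertex sitting above all the $t(p_i)$) and observe that the natural map required to be a split epimorphism in condition (ii) of the source-injective definition --- equivalently, the map $(\bigoplus_i E_i)(w)\to \prod_{s(a)=w}(\bigoplus_i E_i)(t(a))$ --- is not even surjective, because a direct sum does not commute with the relevant infinite product of factors indexed by the infinitely many $p_i$. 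Alternatively, and perhaps more cleanly, I would lift an injective module that is not a direct sum of indecomposables (which exists precisely because $R$ is chosen non-artinian-ish, or simply invoke that $(T,\RMOD)$ is not locally noetherian) to a representation witnessing non-injectivity of the direct sum directly.

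The main obstacle I anticipate is \emph{not} the abstract implication but pinning down the combinatorics of ``not barren'' in a form usable for the explicit construction --- in particular, carefully handling the two distinct ways barrenness can fail (some state infinite versus the state-size sequence not stabilizing) and checking that in each case one genuinely obtains an infinite family $\{E_i\}$ for which the sum fails the evaluation/splitting test, rather than merely failing to stabilize harmlessly. A secondary subtlety is that the source-injective characterization being \emph{disproved} is over \emph{all} rings $R$, so for this lemma it suffices to produce one $R$ (we get to choose it), which simplifies matters; but I must make sure the chosen $R$ has the needed property (infinite-length injectives, or non-local-noetherianity transferring to $(T,\RMOD)$), and cite \citet[Theorem 2.6]{Enochs-et.al:NoetherianQuivers} correctly to bridge the tree-combinatorics with the categorical statement. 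Once that bridge is in place, verifying that $\bigoplus_{i\in I}E_i$ violates condition (ii) at the vertex $w$ is a short computation with products versus coproducts and the explicit formula $e_*^v(M)(w)=\prod_{Q(w,v)}M$.
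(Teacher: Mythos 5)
There is a genuine gap in the part of your argument that is supposed to do the actual work, namely the verification that $\bigoplus_{i\in I}E_i$ is not injective. You propose to show that the map $(\bigoplus_i E_i)(w)\to\prod_{s(a)=w}(\bigoplus_i E_i)(t(a))$ of condition (ii) fails to be a (split) epimorphism because ``a direct sum does not commute with the relevant infinite product.'' But that product is indexed by the arrows leaving the single vertex $w$, not by your infinite family of paths $p_i$; for a finitely-branching non-barren tree such as the binary tree that product is finite, direct sums commute with it, and conditions (i) and (ii) \emph{do} hold for the direct sum (this is exactly what the proof of Theorem \ref{thm:non-barren-tree-with-starting-finite-arrow-from-vertices-NOT-source-inj} establishes, and it is the whole point of the paper: the direct sum satisfies the local conditions yet fails to be injective). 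So failure of condition (ii) cannot be the mechanism. Your combinatorial setup is also off: in a tree the set of vertices $\leq w$ is a chain, so ``infinitely many $p_i$ with $t(p_i)\leq w$'' cannot produce the configuration you need; the correct consequence of non-barrenness, which the paper imports from \citet[Lemma 3.4]{Enochs-et.al:NoetherianQuivers}, is an infinite \emph{antichain} $W$ of pairwise unconnected vertices, which has no common upper bound, so there is no vertex ``sitting above all the $t(p_i)$'' at which to test anything.

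What the paper actually does is a Bass--Papp-style diagonal argument: with $W=\{w_1,w_2,\dots\}$ the antichain, it takes $E_{w_i}$ to be the injective hull of the tail $\bigoplus_{j>i}S_{w_j}(R)$, assembles the canonical map $\varphi:\bigoplus_{w\in W}S_w(R)\to\bigoplus_i E_{w_i}$, and shows that any extension $\psi:S_v(R)\to\bigoplus_i E_{w_i}$ along the inclusion into $S_v(R)$ ($v$ the root) would force $\psi(1_v)$ to have nonzero $i$th component for every $i$, which is impossible in a direct sum. (Your family $e_*^{t(p_i)}(E)$ over an antichain could in fact be made to work, but only by running this same extension/diagonal argument against $S_v(R)$ --- not by testing condition (ii).) Your opening reduction --- not barren $\Rightarrow$ $(T,\RMOD)$ not locally noetherian $\Rightarrow$ some direct sum of injectives is not injective --- is a legitimately different and shorter route, but it rests on the converse Bass--Papp theorem for (locally finitely generated) Grothendieck categories, an external result the paper neither cites nor needs; and since the lemma is used downstream with a \emph{specific} family whose local properties must be checked, the explicit construction cannot be bypassed anyway.
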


\begin{proof}
Since $T$ is not barren, there is an infinite set $W$ of vertices of $T$ in such a way that any two distinct vertices are not connected (see \citet[Lemma 3.4]{Enochs-et.al:NoetherianQuivers}). Let us well order $W$, for example $W = \{w_1, w_2, \ldots\}$, and let us consider the representations $S_{w_1}(R)$, $S_{w_2}(R)$, $\ldots$ (where $S_v$ is Mitchell's functor). Then $\sum_{w\in W} S_{w}(R) \subseteq S_v(R)$, where $v$ is the root of the tree, and this sum is direct. Now given $\widetilde{w}\in W$ we have a canonical projection
\[
\xymatrix{S_{\widetilde{w}}(R) \ar[r]^-{\tau_{\widetilde{w}}} &  \oplus_{w\in W} S_w (R) \ar[r]^{\pi_i} & \oplus_{j > i}S_{w_j} (R)}.
\]
And let  us consider the injective hull $\xymatrix{\oplus_{j > i} S_{w_j}(R) \ar@{^{(}->}[r]^-{d_i}   &  E_{w_i}}$. Then we have the following commutative diagram:
\[
\xymatrix{ S_{\widetilde{w}}(R)  \ar@{^{(}->}[r] \ar[d]_{d_i \pi_i \tau_{\widetilde{w}}} &  \oplus_{w\in W} S_w (R) \ar@{-->}[d]^{\varphi} \\
E_{w_i}  \ar@{^{(}->}[r] &  \oplus_{i\geq 1} E_{w_i}}
\]
where $\varphi$ exists by the universal property of the direct sum. So if we assume that $\oplus_{i\geq 1} E_{w_i}$ is injective, then there exists a morphism $\psi: S_v (R) \to \oplus_{i\geq 1} E_{w_i}$ makes the following diagram commute:
\[
\xymatrix{\oplus_{w\in W} S_w (R) \ar@{^{(}->}[r] \ar[d]_{\varphi} &  S_v (R) \ar@{-->}[dl]^{\psi} \\
\oplus_{i\geq 1} E_{w_i}  & }
\]
Now let $w_i \in W$ fixed, but arbitrary. We will show that the $i$th-component  of $\psi (1_v)$ is nonzero. Let $h: \oplus_{i\geq 1} E_{w_i} (v) \to \oplus_{i\geq 1} E_{w_i} (w_{i+1})$ be the morphism corresponding to the representation  $\oplus_{i\geq 1} E_{w_i}$ acting on the path $p$ such that $s(p) = v$ and $t(p)=w_{i+1}$. Then
\[
h \psi (1_v) = \psi (1_{w_{i+1}}) = \varphi (1_{w_{i+1}}) = (\underbrace{\pi_1 (1_{w_{i+1}})}_{\neq 0}, \underbrace{\pi_2 (1_{w_{i+1}})}_{\neq 0}, \ldots, \underbrace{\pi_i (1_{w_{i+1}})}_{\neq 0}, 0, 0, \ldots)
\]
Thus the $i$th-component of $\psi (1_v)$ is nonzero, for all $i\geq 1$,  which is a contradiction. Hence $\oplus_{i\geq 1} E_{w_i}$ cannot be injective.
\end{proof}

\begin{theorem}\label{thm:non-barren-tree-with-starting-finite-arrow-from-vertices-NOT-source-inj}
If the tree $T$ is such that the set $\{t(a): s(a)= v\}$ is
finite for each $v\in T$ and each arrow $a$, and $T$ is not barren, then $T$ is
not a source injective representation quiver.
\end{theorem}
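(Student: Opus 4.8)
The plan is to prove the contrapositive of the two local characterizing conditions failing for some injective representation.  Concretely, by Lemma~\ref{prop:For-non-barren-treeTHERE-exists-family-of-inj-rep-directSUM-not-inj.}, since $T$ is not barren there is an infinite family $\{E_i \mid i\in I\}$ of injective representations of $(T,\RMOD)$ whose direct sum $E:=\bigoplus_{i\in I}E_i$ is \emph{not} injective.  If $T$ were a source injective representation quiver, then $E$ would automatically be injective once we check conditions (i) and (ii) of the definition for it; so it suffices to verify that $E$ satisfies both (i) and (ii), thereby contradicting the non-injectivity supplied by the Lemma.

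Condition~(i) is immediate: for each vertex $v$, $E(v)=\bigoplus_{i\in I}E_i(v)$ is a direct sum of injective $R$-modules, and here we use the extra hypothesis --- via the way the $E_i$'s were built in the Lemma (each $E_i$ is the injective hull of a \emph{finitely generated} free object over a vertex with only finitely many successors, when $R$ is Noetherian, but in general one should observe that only finitely many $E_i$ are nonzero at each vertex, so the sum is finite) --- to conclude $E(v)$ is injective.  The point of the finiteness assumption ``$\{t(a):s(a)=v\}$ is finite'' is precisely that, at any fixed vertex, only finitely many of the summands $E_i$ have nonzero value, so $E(v)$ is a \emph{finite} direct sum of injectives and hence injective; this also makes the product in condition~(ii) a finite product, i.e.\ a finite direct sum.

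For condition~(ii), fix a vertex $v$; by the finiteness hypothesis $\{t(a):s(a)=v\}=\{t(a_1),\dots,t(a_n)\}$ is finite, so the map in question is
\[
E(v)\To \prod_{s(a)=v}E(t(a))=\bigoplus_{k=1}^{n}E(t(a_k)),
\]
and since direct sums commute with finite products, this map is the direct sum over $i\in I$ of the corresponding maps $E_i(v)\to \bigoplus_{k=1}^n E_i(t(a_k))$.  Each $E_i$ is injective, and by \citet[Proposition~2.1]{Enochs-et.al:InjectiveRepresentationsOfQuivers} (the same tool used in the proof of Theorem~\ref{thm:characterization-of-indec.inj.rep-transfinite-trees}) the map $E_i(v)\to\prod_{s(a)=v}E_i(t(a))$ is a splitting epimorphism.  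A finite --- indeed arbitrary --- direct sum of splitting epimorphisms is a splitting epimorphism (add up the sections), so the map for $E$ splits.  Hence $E$ satisfies both (i) and (ii) but is not injective, contradicting the assumption that $T$ is a source injective representation quiver.

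The main obstacle, and the place where the hypothesis genuinely enters, is ensuring that \emph{both} the module $E(v)$ and the target of the map in~(ii) behave well under infinite direct sums: arbitrary direct sums of injective modules need not be injective (that is exactly the phenomenon the Lemma exploits at the level of representations), and an infinite \emph{product} $\prod_{s(a)=v}$ does not commute with the infinite direct sum $\bigoplus_{i\in I}$.  The finiteness of $\{t(a):s(a)=v\}$ resolves the second issue outright, and one must additionally argue --- again from the construction in the Lemma, where the vertices $w_i$ form an infinite antichain so that each fixed vertex lies below only finitely many of them --- that at each vertex $v$ only finitely many $E_i(v)$ are nonzero, which resolves the first.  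Once these two finiteness facts are in place the verification of (i) and (ii) is routine, and the contradiction with the Lemma completes the proof.
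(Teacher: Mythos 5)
Your overall strategy coincides with the paper's: invoke Lemma~\ref{prop:For-non-barren-treeTHERE-exists-family-of-inj-rep-directSUM-not-inj.} to produce injective representations $E_{w_i}$ with non-injective direct sum, verify that the direct sum nevertheless satisfies the two local conditions, and conclude. Your treatment of condition (ii) is also essentially the paper's: finiteness of $\{t(a):s(a)=v\}$ turns the product into a finite one, which commutes with the direct sum, and a direct sum of splitting epimorphisms splits.

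However, your justification of condition (i) contains a genuine error. You claim that at each fixed vertex $v$ only finitely many of the $E_i(v)$ are nonzero, ``because the $w_i$ form an antichain and each fixed vertex lies below only finitely many of them.'' This is false: the root $v$ of the tree lies below \emph{all} of the $w_i$, and each $E_{w_i}$ (being the injective hull of $\bigoplus_{j>i}S_{w_j}(R)$, an \emph{infinite} direct sum, not a finitely generated object as you state) is nonzero at the root --- indeed the proof of the Lemma itself depends on the $i$th component of $\psi(1_v)\in E_{w_i}(v)$ being nonzero for every $i$, which forces $E_{w_i}(v)\neq 0$ for all $i$. So $E(v)=\bigoplus_i E_i(v)$ is a genuinely infinite direct sum of injective modules, and over an arbitrary ring such a sum need not be injective; your argument for (i) collapses. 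The paper's fix is to exploit the quantifier in the definition of source injective representation quiver (``for any ring $R$''): it suffices to refute the characterization for a single ring, so one fixes $R$ left noetherian, over which arbitrary direct sums of injective $R$-modules are injective, and condition (i) follows. Your proposal gestures at noetherianity in passing but then explicitly discards it in favour of the incorrect finite-support claim, so as written the proof has a gap precisely at the point where the choice of ring is essential.
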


\begin{proof}
Assume that $R$ is a left noetherian ring. Since $T$ is not barren, the previous argument in the proof of Lemma \ref{prop:For-non-barren-treeTHERE-exists-family-of-inj-rep-directSUM-not-inj.} shows that there is a family of injective representations $\{E_w \mid w\in W \}$ such that $\oplus_W E_w$ is not injective. However, as $E_w$ $(w\in W)$ are injective representations, they do satisfy the conditions (i) and (ii) of being a source injective representation quiver (by \citet[Proposition 2.1]{Enochs-et.al:InjectiveRepresentationsOfQuivers}), that is,
\begin{enumerate}
  \item[(i)] $E_w (v)$ is an injective $R$-module for all $v\in T$;
  \item[(ii)] $\displaystyle E_w (v) \To \prod_{s(a)=v} E_w(t(a))$ is a splitting epimorphism.
\end{enumerate}
Therefore, the representation $\oplus_W E_w$ also satisfies (i) and (ii). In fact,
\begin{enumerate}
  \item[(i)] $\displaystyle \big(\bigoplus_W E_w \big)(v) = \bigoplus_W E_w (v) $ is an injective $R$-module, for all $v\in T$ (since $R$ is left noetherian);
  \item[(ii)] $\displaystyle \big(\bigoplus_{w\in W} E_w \big)(v) \To \bigoplus_{w\in W} \prod_{s(a)=v} E_w (t(a)) \cong \prod_{s(a)=v} \big(\bigoplus_{w\in W}E_w\big)(t(a))$ is a splitting epimorphism (where the isomorphism follows since the set $\{t(a) \mid s(a) = v \}$ is finite by hypothesis).
\end{enumerate}
Hence $T$ is not a source injective representation quiver.
\end{proof}

\begin{example}
The binary tree is not a source injective representation quiver.
\end{example}

\begin{remark}
In Theorem \ref{thm:non-barren-tree-with-starting-finite-arrow-from-vertices-NOT-source-inj}, the condition $\{t(a): s(a)= v\}$ is finite for each $v\in T$ cannot be omitted. For instance, if we consider the non-barren tree:
$$\xymatrix{  &   &  v_1   \\
T \equiv & v_0 \ar[r] \ar[ur]^{\vdots} \ar[dr]_{\vdots}  &  v_2   \\
 &   & v_3 }$$ then the direct sum $\oplus_W E_w$ constructed in Theorem \ref{thm:non-barren-tree-with-starting-finite-arrow-from-vertices-NOT-source-inj} does not satisfy the condition (ii) of being a source injective representation quiver (even if $R$ is left noetherian). Furthermore, $T$ is a source injective representation quiver because it is right rooted quiver (see \citet[Theorem 4.2]{Enochs-et.al:InjectiveRepresentationsOfQuivers}).
\end{remark}

\subsection*{Acknowledgments}
The second author wishes to thank Francisco Guil Asensio for several stimulating
conversations related to the topic of this paper.

\bibliographystyle{deufen}
\bibliography{kaynaklar-Selahattin}
\end{document}